\theoremstyle{plain}
\newtheorem{theorem}{Theorem}
\newtheorem{thm}[theorem]{Theorem}
\newtheorem{lemma}[theorem]{Lemma}
\newtheorem{corollary}[theorem]{Corollary}
\newtheorem{proposition}[theorem]{Proposition}
\theoremstyle{definition}
\newtheorem{assumption}[theorem]{Assumption}
\theoremstyle{remark}
\newcommand*{\R}{\mathbb{R}}
\newcommand*{\N}{\mathbb{N}}
\newcommand*{\eps}{\varepsilon}
\newcommand{\sulut}[1]{\left( #1 \right)}
\newcommand{\parens}[1]{\left( #1 \right)}
\newcommand{\joukko}[1]{\left\{ #1 \right\}}
\newcommand{\abs}[1]{\left\lvert #1 \right\rvert}
\newcommand{\norm}[1]{\left\| #1 \right\|}
\newcommand{\der}{\thinspace\mathrm{d}}
\DeclareMathOperator{\dive}{div}
\DeclareMathOperator*{\esssup}{ess \, sup}
\DeclareMathOperator*{\essinf}{ess \, inf}
\newcommand*{\mint}[1]{%
  \mint@l{#1}{}%
}
\newcommand*{\mint@l}[2]{%
  \@ifnextchar\limits{%
    \mint@l{#1}%
  }{%
    \@ifnextchar\nolimits{%
      \mint@l{#1}%
    }{%
      \@ifnextchar\displaylimits{%
        \mint@l{#1}%
      }{%
        \mint@s{#2}{#1}%
      }%
    }%
  }%
}
\newcommand*{\mint@s}[2]{%
  \@ifnextchar_{%
    \mint@sub{#1}{#2}%
  }{%
    \@ifnextchar^{%
      \mint@sup{#1}{#2}%
    }{%
      \mint@{#1}{#2}{}{}%
    }%
  }%
}
\def\mint@sub#1#2_#3{%
  \@ifnextchar^{%
    \mint@sub@sup{#1}{#2}{#3}%
  }{%
    \mint@{#1}{#2}{#3}{}%
  }%
}
\def\mint@sup#1#2^#3{%
  \@ifnextchar_{%
    \mint@sup@sub{#1}{#2}{#3}%
  }{%
    \mint@{#1}{#2}{}{#3}%
  }%
}
\def\mint@sub@sup#1#2#3^#4{%
  \mint@{#1}{#2}{#3}{#4}%
}
\def\mint@sup@sub#1#2#3_#4{%
  \mint@{#1}{#2}{#4}{#3}%
}
\newcommand*{\mint@}[4]{%
  \mathop{}%
  \mkern-\thinmuskip
  \mathchoice{%
    \mint@@{#1}{#2}{#3}{#4}%
        \displaystyle\textstyle\scriptstyle
  }{%
    \mint@@{#1}{#2}{#3}{#4}%
        \textstyle\scriptstyle\scriptstyle
  }{%
    \mint@@{#1}{#2}{#3}{#4}%
        \scriptstyle\scriptscriptstyle\scriptscriptstyle
  }{%
    \mint@@{#1}{#2}{#3}{#4}%
        \scriptscriptstyle\scriptscriptstyle\scriptscriptstyle
  }%
  \mkern-\thinmuskip
  \int#1%
  \ifx\\#3\\\else_{#3}\fi
  \ifx\\#4\\\else^{#4}\fi  
}
\newcommand*{\mint@@}[7]{%
  \begingroup
    \sbox0{$#5\int\m@th$}%
    \sbox2{$#5\int_{}\m@th$}%
    \dimen2=\wd0 %
    \let\mint@limits=#1\relax
    \ifx\mint@limits\relax
      \sbox4{$#5\int_{\kern1sp}^{\kern1sp}\m@th$}%
      \ifdim\wd4>\wd2 %
        \let\mint@limits=\nolimits
      \else
        \let\mint@limits=\limits
      \fi
    \fi
    \ifx\mint@limits\displaylimits
      \ifx#5\displaystyle
        \let\mint@limits=\limits
      \fi
    \fi
    \ifx\mint@limits\limits
      \sbox0{$#7#3\m@th$}%
      \sbox2{$#7#4\m@th$}%
      \ifdim\wd0>\dimen2 %
        \dimen2=\wd0 %
      \fi
      \ifdim\wd2>\dimen2 %
        \dimen2=\wd2 %
      \fi
    \fi
    \rlap{%
      $#5%
        \vcenter{%
          \hbox to\dimen2{%
            \hss
            $#6{#2}\m@th$%
            \hss
          }%
        }%
      $%
    }%
  \endgroup
}
\setlist{nolistsep} 
\title{Recovering a variable exponent}
\author{Tommi Brander \\ tommi.brander@ntnu.no}
\affil{Norwegian University of Science and Technology}
\author{Jarkko Siltakoski \\ jarkko.j.m.siltakoski@student.jyu.fi}
\affil{University of Jyväskylä}
\begin{document}

\maketitle

\begin{abstract}
We consider an inverse problem of recovering the non-linearity for the one dimensional variable exponent $p(x)$-Laplace equation from the Dirichlet-to-Neumann map.
The variable exponent can be recovered up to the natural obstruction of rearrangements.
The main technique is using a Müntz-Szász theorem after reducing the problem to determining a function from its $L^p$-norms.
\end{abstract}

\paragraph{Keywords} Calderón's problem, inverse problem, variable exponent, non-standard growth, Müntz-Szász theorem, approximation by polynomials, elliptic equation, quasilinear equation

\paragraph{Mathematics subject classification}
34A55, 
41A10 
34B15, 
28A25 





\section{Introduction}

Calderón's fundamental inverse problem~\cite{Calderon:1980,Feldman:Salo:Uhlmann} asks if a weight function~$\gamma$ can be recovered from Dirichlet and Neumann measurements on the boundary of a domain, when the data come from the weighted Laplace equation
\begin{equation}
-\dive \parens{\gamma \nabla u} = 0.
\end{equation}
The weight function~$\gamma$ is considered as conductivity of electricity or heat, the Dirichlet boundary values as voltage or temperature, and the Neumann boundary values as current flux or heat flux through the boundary. The equation is derived from Ohm's law
\begin{equation}
-\gamma \nabla u = I
\end{equation}
and Kirchhoff's law
\begin{equation}
\dive I = 0,
\end{equation}
or corresponding laws for heat conduction.

The problem has been generalized to many other equations, of which we are interested in non-linear and singular or degenerate elliptic ones.
The physical motivation for these is that Ohm's law is only an approximation and many real-world systems exhibit highly non-linear IV (or current-voltage) patterns.
Power law -type patterns lead to the $p$-conductivity equation
\begin{equation}
-\dive \parens{\gamma \abs{\nabla u}^{p-2} \nabla u} = 0
\end{equation}
first introduced by Salo and Zhong~\cite{Salo:Zhong:2012} and investigated further by Salo and others others~\cite{Brander:2016:jan,Brander:2016:apr,Brander:Harrach:Kar:Salo:2018,Brander:Ilmavirta:Kar:2018,Brander:Kar:Salo:2015,Guo:Kar:Salo:2016,Hannukainen:Hyvonen:Mustonen:2019,Kar:Wang}.
In these works the exponent~$1 < p < \infty$ is assumed to be a known constant, whilst the linear factor in the conductivity~$\gamma$ is the unknown.
Cârstea and Kar~\cite{Carstea:Kar} investigate a combination of linear and power law type conductivity.

On the other hand, general, but typically non-degenerate, $A$-harmonic equations
\begin{equation}
-\dive \parens{a(x,u,\nabla u) \nabla u} = 0
\end{equation}
have also been researched.
A typical method is linearizing the equation or using Carleman estimates, hence relying on completely different techniques when compared to the present work.
Quasilinearities depending on the solution, with $a(u)$, have been investigated first by Cannon~\cite{Cannon:1967} and then many others~\cite{Pilant:Rundell:1988,Lesnic:Elliott:Ingham:1996,Sun:1996,Kugler:2003,Mierzwiczak:Kolodziej:2011,Egger:Pietschmann:Schlottbom:2014,Egger:Pietschmann:Schlottbom:2017}.
Sun and Uhlmann~\cite{Sun:Uhlmann:1997} considered non-degenerate and fairly smooth dependence on $u$ and $x$. A problem with similar dependency has also been studied by Chen, Chen and Wei~\cite{Chen:Chen:Wei:2015}.
Hervas and Sun~\cite{Hervas:Sun:2002} considered smooth coefficients $a(\nabla u)$.
Muñoz and Uhlmann~\cite{Munoz:Uhlmann:2018} considered non-degenerate elliptic~$a(u, \nabla u)$.
Lassas, Liimatainen and Salo consider general non-degenerate real-analytic conductivities~\cite{Lassas:Liimatainen:Salo:2019}, while Shankar~\cite{Shankar:2019} considers a non-degenerate and very smooth $a(x, u, \nabla u) $.
We also mention the work of Nakamura and collaborators~\cite{Kang:Nakamura:2002,Carstea:Nakamura:Vashisth:2019}.

In the present work we consider the variable exponent $p(\cdot)$-Laplace equation
\begin{equation}
-\dive \parens{\gamma(x) \abs{\nabla u}^{p(x)-2} \nabla u} = 0, \label{eq:var_plap}
\end{equation}
which can be both singular and degenerate at the same time.
Also, we make no smoothness assumptions.
The mathematical issues raised by the variable exponent in the forward problem have been covered in a monograph~\cite{Diening:Harjulehto:Hasto:Ruzicka:2011}.
One physical motivation for such problems is the conductivity of electricity in certain almost-superconductive materials, where the exponent~$p(\cdot)$ is a function of temperature~\cite{Bueno:Longo:Varela:2008,Kalok:2013}, which should not be assumed constant and might very well be unknown.
Former work on inverse problems for the equation consists of a boundary determination result with interior data~\cite{Brander:Ringholm}, which does not make essential use of the variable exponent, and of a characterization of conductivities~$\gamma$ that can be recovered when the exponent~$p(\cdot)$ is known~\cite{Brander:Winterrose:2019}.

In the present paper our aim is to recover the exponent~$p(\cdot)$ from the Dirichlet-to-Neumann map.
We use the results of Brander and Winterrose~\cite{Brander:Winterrose:2019}; some basic facts about the problem are summarized as preliminaries in section~\ref{sec:prelim}, together with an unrelated lemma.
Before the preliminaries we present the problem and our main results.
In section~\ref{sec:limits}, we investigate the behaviour of the Dirichlet-to-Neumann map as the difference of Dirichlet data goes to zero or infinity.
This gives explicit information about the maximum and minimum of $p$ with very few assumptions.
Finally, in section~\ref{sec:muntz}, we prove the main injectivity result via a Müntz-Szász type theorem.

\subsection{The problem}
As a forward model we consider the equation
\begin{equation}
\label{eq:forward}
-\parens{\gamma(x) \abs{u'(x)}^{p(x)-2} u'(x)}' = 0,
\end{equation}
with Dirichlet boundary data $u(a) = A$ and $u(b) = B$.
We assume, for convenience, that $A \leq B$, so that the absolute value can be removed from the equation, and write $m = B-A$. We often neglect to write the argument~$x$ in $\gamma(x)$ and $u'(x)$, but, for emphasis, keep it in $p(x)$.
We can solve the equation almost explicitly in terms of the intermediate function $m \mapsto K_m$ (sometimes we omit the subindex~$m$ and write simply $K$) defined by
\begin{equation}
\label{eq:K_m}
m = \int_a^b \gamma^{-1/(p(x)-1)} K_m^{1/(p(x)-1)} \der x.
\end{equation}
The Dirichlet-to-Neumann map (hereafter DN~map) can then be defined as
\begin{equation}
\label{eq:DN_map}
\Lambda_\gamma^p(m) = \int_a^b \gamma^{-1/(p(x)-1)} K_m^{p(x)/(p(x)-1)} \der x = mK_m.
\end{equation}
We provide a reference for this formulation in section~\ref{sec:prelim}.

\paragraph{Question:} Given $m \mapsto \Lambda_\gamma^p(m)$ and possibly~$\gamma$, can one recover $p$?

\subsection{Results}
Let $f \colon (X, \mu) \to [0, \infty]$ be a measurable function defined on a $\sigma$-finite measure space.
Define the \textit{distribution
function} of $f$ by
\begin{equation}
\label{eq:distribution_function}
\mu^{f}(t):= \mu\parens{\left\{ x\in X ; f(x)>t\right\} }\quad\text{for all }t\in[0, \infty].
\end{equation}
We say two functions are equimeasurable if and only if their distribution functions are equal.




\begin{corollary}
\label{cor:main}
Let $\gamma >0$ be a constant, and the exponents~$p_1$ and $p_2$ Lebesgue measurable and  bounded away from one and infinity.
Then $\Lambda_\gamma^{p_1} = \Lambda_\gamma^{p_2}$ if and only if $p_1$ and $p_2$ are equimeasurable with respect to the Lebesgue measure.
\end{corollary}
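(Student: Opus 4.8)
The plan is to eliminate the Dirichlet-to-Neumann map in favour of the elementary auxiliary function $s\mapsto\int_a^b s^{q(x)}\der x$, where $q:=1/(p-1)$, to recognise this as a complete family of $L^t$-norms, and then to invoke uniqueness of a compactly supported measure determined by its moments. This is the point at which the Müntz--Szász circle of ideas enters: for the present statement, where $\Lambda_\gamma^{p_i}$ is known for \emph{all} $m$, plain Weierstrass approximation suffices, but the same scheme with a sparse set of data would instead require a genuine Müntz--Szász completeness theorem.

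Write $q_i:=1/(p_i-1)$; the hypotheses say exactly that each $q_i$ is Lebesgue measurable with $0<c\le q_i\le C<\infty$ on $[a,b]$ for a common pair of constants $c,C$. With $\gamma$ constant, \eqref{eq:K_m} reads $m=\int_a^b(K_m/\gamma)^{q_i(x)}\der x$, so setting $s:=K_m/\gamma$ and $F_i(s):=\int_a^b s^{q_i(x)}\der x$ one gets $m=F_i(s)$ and, by \eqref{eq:DN_map}, $\Lambda_\gamma^{p_i}(m)=mK_m=\gamma\,s\,F_i(s)$. Since $q_i>0$, each $F_i$ is continuous and strictly increasing on $(0,\infty)$ with $F_i(0^+)=0$ and $F_i(+\infty)=+\infty$, hence a homeomorphism of $(0,\infty)$ onto itself; thus $m\mapsto s$ is a well-defined increasing bijection, and knowing $\Lambda_\gamma^{p_i}$ as a function of $m$ is equivalent to knowing $F_i$ as a function of $s$. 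In particular $\Lambda_\gamma^{p_1}=\Lambda_\gamma^{p_2}$ if and only if $F_1=F_2$: equality of the two maps forces $s_1(m)=\Lambda_\gamma^{p_1}(m)/(\gamma m)=\Lambda_\gamma^{p_2}(m)/(\gamma m)=s_2(m)$ for every $m$, so the value of $\gamma$ is not even needed.

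It now suffices to prove that $F_1=F_2$ holds if and only if $q_1$ and $q_2$ are equimeasurable, for then $p_1$ and $p_2$ are equimeasurable as well, $p\mapsto 1/(p-1)$ being a continuous strictly monotone bijection and such maps preserving distribution functions. If $q_1$ and $q_2$ have a common distribution $\nu$ (a finite Borel measure on $[c,C]$), then, by the change of variables $\int_a^b s^{q_i(x)}\der x=\int_{[c,C]}s^{\lambda}\der\nu(\lambda)$, we get $F_1=F_2$. Conversely, substituting $s=e^t$ turns $F_i$ into $G_i(t):=\int_a^b e^{tq_i(x)}\der x=\norm{e^{q_i}}_{L^t([a,b])}^t$ for $t>0$; by the bounds on $q_i$ this extends to an entire function of $t$ whose Taylor coefficients at the origin are $\frac1{k!}\int_a^b q_i(x)^k\der x$, so $G_1=G_2$ forces $\int_a^b q_1^k\der x=\int_a^b q_2^k\der x$ for every $k\in\N$. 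Weierstrass approximation on $[c,C]$ then gives $\int_a^b\phi(q_1)\der x=\int_a^b\phi(q_2)\der x$ for all $\phi\in C([c,C])$, that is, $\int_{[c,C]}\phi\der\nu_1=\int_{[c,C]}\phi\der\nu_2$ for the distributions $\nu_i$ of $q_i$; since continuous functions determine finite Borel measures on a compact interval, $\nu_1=\nu_2$, which is equimeasurability of $q_1$ and $q_2$.

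I expect the only genuinely delicate point to be the reduction in the second paragraph: one must make sure that \eqref{eq:K_m} defines $K_m$ uniquely for each admissible $m$ (the content recalled in section~\ref{sec:prelim}), that $F_i$ is a bijection so the correspondence $m\leftrightarrow s$ is legitimate, and that differentiation under the integral sign and the change of variables are justified --- all of which follow from the uniform bounds $0<c\le q_i\le C$. The moment problem closing the argument is classical once the support is confined to the fixed compact interval $[c,C]$, and the heavier Müntz--Szász machinery is needed only when the Dirichlet-to-Neumann data is incomplete, in which case one must instead verify completeness of the corresponding system $\{e^{t_n q}\}$ in $C([c,C])$.
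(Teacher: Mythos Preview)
Your argument is correct, and it is genuinely different from the paper's. The paper proves the forward implication by first citing Proposition~\ref{prop:Lpnorms} (itself a corollary of \cite[Proposition~26]{Brander:Winterrose:2019}) to extract the weighted moments $\int_a^b (1/(p_i-1))^n\,\gamma^{-1/(p_i-1)}\der x$ from derivatives of the DN~map, then uses the Müntz--Szász theorem (Lemma~\ref{lem:lp_uniqueness}) to conclude equimeasurability with respect to the \emph{weighted} measures $\mu_i=\gamma^{-1/(p_i-1)}\der x$, and finally runs a separate Grönwall-type argument (Theorem~\ref{thm:constant}) to strip the weight and obtain equimeasurability in Lebesgue measure. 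You bypass all of this by noticing that, for constant~$\gamma$, the substitution $s=K_m/\gamma$ removes $\gamma$ from the problem at the very first step: $\Lambda_\gamma^{p_i}(m)=\gamma m s_i(m)$ and $m=F_i(s_i(m))$ together show that the DN~maps agree if and only if the unweighted functions $F_i(s)=\int_a^b s^{q_i}\der x$ agree, whence the entire weighted-measure detour and the Grönwall step are unnecessary.

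What each approach buys: the paper's route is more modular---Theorem~\ref{thm:main} remains meaningful for non-constant~$\gamma$ (with the conclusion stated in terms of the $p$-dependent weights), and the use of Müntz--Szász rather than Weierstrass means that a discrete lacunary set of DN~values would suffice. Your route is shorter and self-contained for the specific statement of Corollary~\ref{cor:main}, relies only on the bijection $(0,\infty)\ni m\leftrightarrow s\in(0,\infty)$ already established in Section~\ref{sec:prelim}, and replaces the cited proposition from \cite{Brander:Winterrose:2019} plus the Grönwall calculation by a single moment-problem argument on the compact interval $[c,C]$. Your closing remark about when Müntz--Szász would genuinely be required (sparse data) is accurate and matches the paper's motivation for stating Lemma~\ref{lem:lp_uniqueness} with a general sequence $(n_j)$.
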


\begin{proof}
Theorem~\ref{thm:constant} tells that if the DN~maps agree, then the exponents~$p_i$ are equimeasurable.

Suppose now that $p_1$ and $p_2$ are equimeasurable and consider the definition of $K_m$ in equation~\eqref{eq:K_m}.
Since $\gamma$ is constant, the functions $x \mapsto (K/\gamma)^{1/(p_{i}(x)-1)}$ are equimeasurable by lemma~\ref{lemma:equi_bijection}, given any fixed constant~$K/\gamma \neq 1$, and in case of $K/\gamma=1$ they are equal.
But then their integrals agree by Tonelli's theorem, whence $K_m$ takes the same value for $p_1$ and $p_2$ for every~$m > 0$ (note that $m \mapsto K_m$ is injective, given any fixed $\gamma$ and $p$~\cite[lemma~7]{Brander:Winterrose:2019}).

We now consider equation~\eqref{eq:DN_map} written as
\begin{equation}
\Lambda_{\gamma}^{p_i}(m) = K_m \int_a^b \parens{K_m/\gamma}^{1/(p_i(x)-1)} \der x.
\end{equation}
Consider a fixed $m > 0$, whence $K_m$ takes the same value for $p_1$ and $p_2$.
It was already established that the integrals take the same value independent of $i$, which gives the equality of the DN~maps.
\end{proof}

A similar result should be reachable in one-dimensional multifrequency SPECT imaging~\cite{Brander:Ilmavirta:Tyni}.
There the corresponding question would be recovering the attenuation, given knowledge of the source term.

The corollary gives a uniqueness result, but we would like to have a reconstruction procedure, also.
Suppose $\gamma \equiv 1$.
By the proof of lemma~\ref{lem:lp_uniqueness} we know the inner products
\begin{equation}
   \int_0^M \mu(t) t^n \der t,
\end{equation}
where $\mu$ is the distribution function of $1/(p(x)-1)$ and $n \in \N$.
An orthonormal basis of the vector space with basis $\joukko{1, t, t^2, t^3, \ldots}$ gives an optimal way of recovering the distribution function~$\mu$.
The monograph~\cite[section~2.3]{Borwein:Erdelyi:1995} discusses orthonormal bases of polynomials.
Thereafter, one can recover a special rearrangement~$f^*$, the non-symmetric decreasing rearrangement~\cite{Bennett:Sharpley:1988,Kristiansson:2002}, of the function $f(x) = 1/(p(x)-1)$ by the formula
\begin{equation}
    f^{\ast}(x) = \inf\left\{ t \in [0,\infty];\mu^{f}(t)\leq x \right\},
\end{equation}
where we use the convention that $\inf\emptyset=\infty$. The function
$f^{\ast}$ is decreasing and continuous from the right. In particular,
$f$ and $f^{\ast}$ are \textit{equimeasurable}, i.e.\ their distribution
functions coincide.
Of course, one could choose to recover some other rearrangement instead.
Finally, the recovered exponent is
$r(x) = 1 + 1/f^*(x)$; it is increasing and continuous from the right.

The next theorem contains less information, but is computationally straightforward and assumes no a priori knowledge on $\gamma$. 
 It is proven in section~\ref{sec:limits}.
Write $p^- = \essinf_{a \le x \le b} p(x)$ and $p^+ = \esssup_{a \le x \le b} p(x)$.

\begin{theorem}
\label{thm:m_limits}
Let $\gamma \in L^\infty_+([a, b])$. 
The DN~map determines the quantities $p^+$, $p^-$, and if these are reached in sets of positive measure also the integrals
\begin{align}
&\int_{\joukko{x\in[a, b]; p(x) = p^-}} \gamma^{-1/(p^- - 1)}  \der x  \text{ and}\\
&\int_{\joukko{x\in[a, b]; p(x) = p^+}} \gamma^{-1/(p^+ - 1)}  \der x
\end{align}
in a constructive way.
\end{theorem}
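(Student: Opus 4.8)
The plan is to push everything onto the asymptotics, as $s \to \infty$ and as $s \to 0^+$, of the auxiliary function
\[
  g(s) := \int_a^b \parens{s/\gamma(x)}^{1/(p(x)-1)} \der x , \qquad s \in (0,\infty),
\]
which by \eqref{eq:K_m} satisfies $m = g(K_m)$. Since $\gamma \in L^\infty_+$ (so $\gamma$ and $1/\gamma$ are essentially bounded) and $1 < p^- \le p^+ < \infty$, each integrand $s \mapsto (s/\gamma(x))^{1/(p(x)-1)}$ is strictly increasing and continuous, hence so is $g$; together with $g(s)\to 0$ as $s\to 0^+$ and $g(s)\to\infty$ as $s\to\infty$ (from the two-sided estimates below), $g$ is a bijection of $(0,\infty)$. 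Therefore $m \mapsto K_m$ is a continuous increasing bijection of $(0,\infty)$ (compare \cite[lemma~7]{Brander:Winterrose:2019}), and since $K_m = \Lambda_\gamma^p(m)/m$, the DN~map determines $g$ on all of $(0,\infty)$ — in particular near $0$ and near $\infty$, which is all we use.

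\emph{Recovering $p^\pm$.} Write $\beta(x) = 1/(p(x)-1)$, so $\beta(x) \in [1/(p^+-1),\, 1/(p^--1)]$ and $\gamma^{-\beta(\cdot)}$ is bounded above and below by positive constants. For $s \ge 1$ the upper bound $g(s) \le C\,(b-a)\, s^{1/(p^--1)}$ is immediate from $s^{\beta(x)} \le s^{1/(p^--1)}$. For the lower bound fix $\eps > 0$: the set $\joukko{x ; p(x) < p^- + \eps}$ has positive measure, on it $\beta(x) > 1/(p^- + \eps - 1)$, and $s^{\beta(x)} \ge s^{1/(p^- + \eps - 1)}$ for $s \ge 1$, so $g(s) \ge c_\eps\, s^{1/(p^- + \eps - 1)}$. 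Letting $s \to \infty$ and then $\eps \to 0$ gives $\log g(s) / \log s \to 1/(p^--1)$; the mirror estimates as $s \to 0^+$ (now comparing to $1/(p^+-1)$) give $\log g(s)/\log s \to 1/(p^+-1)$. Feeding in $s = K_m = \Lambda_\gamma^p(m)/m$ and using $\log \Lambda_\gamma^p(m) = \log m + \log K_m$ converts these into
\[
  \frac{\log \Lambda_\gamma^p(m)}{\log m} \longrightarrow p^- \ \ (m \to \infty), \qquad \frac{\log \Lambda_\gamma^p(m)}{\log m} \longrightarrow p^+ \ \ (m \to 0^+),
\]
so $p^-$ and $p^+$ are determined.

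\emph{Recovering the level-set integrals.} Suppose $E^- := \joukko{x \in [a,b] ; p(x) = p^-}$ has positive measure and put $\alpha = 1/(p^--1)$. Split
\[
  \frac{g(s)}{s^\alpha} = \int_{E^-} \gamma^{-\alpha} \der x + \int_{[a,b] \setminus E^-} \gamma^{-\beta(x)}\, s^{\beta(x) - \alpha} \der x .
\]
On $[a,b]\setminus E^-$ we have $\beta(x) < \alpha$, so the integrand of the second term tends to $0$ pointwise as $s \to \infty$ and is bounded by the constant $\sup \gamma^{-\beta}$ for $s \ge 1$; dominated convergence gives $g(s)/s^\alpha \to \int_{E^-}\gamma^{-1/(p^--1)}\der x$. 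Substituting $s = K_m = \Lambda_\gamma^p(m)/m$ and simplifying (with $1 + \alpha = p^-/(p^--1)$) yields the constructive formula
\[
  \frac{m^{\,p^-/(p^--1)}}{\Lambda_\gamma^p(m)^{\,1/(p^--1)}} \longrightarrow \int_{E^-} \gamma^{-1/(p^--1)} \der x \qquad (m \to \infty),
\]
legitimate since $p^-$ is already known. The $p^+$ case is identical with $\delta = 1/(p^+-1)$, taking $s \to 0^+$: off $E^+ := \joukko{x; p(x)=p^+}$ one has $\beta(x) > \delta$, so $s^{\beta(x)-\delta} \le 1$ for $s \le 1$ and $\to 0$ as $s \to 0^+$, giving $m^{\,p^+/(p^+-1)}/\Lambda_\gamma^p(m)^{\,1/(p^+-1)} \to \int_{E^+}\gamma^{-1/(p^+-1)}\der x$ as $m \to 0^+$.

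The conceptual content is modest; the delicate points are bookkeeping. One is confirming that $m \mapsto K_m$ really is onto $(0,\infty)$, so that the regimes $m \to 0^+$ and $m \to \infty$ are genuinely probed by the data. The other is the behaviour off the extremal level sets, where $p(x)$ may approach $p^\mp$ without attaining it, so there is no uniform gap in the exponents $\beta(x) - \alpha$ (resp.\ $\beta(x) - \delta$); this is exactly where one must lean on the monotonicity $s^{\beta(x)-\alpha}\le 1$ for $s\ge 1$, together with the $L^\infty$ bounds on $\gamma$, to apply dominated convergence, rather than on any quantitative separation of exponents.
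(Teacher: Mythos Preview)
Your argument is correct and is essentially the paper's own proof, just packaged more cleanly: you name the inverse $g$ of $m\mapsto K_m$ explicitly and read off $p^\pm$ from $\log\Lambda_\gamma^p(m)/\log m$, whereas the paper reaches the same conclusion via the sup/inf characterisations in lemmas~\ref{lma:recover_sup_inf} and~\ref{lma:recover_sup_inf_plus}, which rest on the identical growth estimates you derive. For the level-set integrals you split $g(s)/s^\alpha$ and apply dominated convergence before any exponent is applied, which is exactly the paper's computation $m^{-p^-}\Lambda_\gamma^p(m)=K_m\,g(K_m)^{1-p^-}$ rewritten (indeed your limit $m^{p^-/(p^--1)}\Lambda_\gamma^p(m)^{-1/(p^--1)}$ is the $(-1/(p^--1))$-th power of the paper's), and your version sidesteps the paper's slightly sloppy display that appears to split $(A+B)^{1-p^-}$ additively.
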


This theorem has some corollaries.
For a constant and a priori known~$\gamma$, the measure of the set where $p(x) = p^\pm$ is found out, if it is known to be positive.
A non-constant known~$\gamma$ or a~priori bounds on an unknown~$\gamma$ would give an estimate for the sizes of the sets.

\section{Preliminaries}
\label{sec:prelim}

We give some results from the article of Brander and Winterrose~\cite{Brander:Winterrose:2019}, which build on known results for the variable exponent equation~\cite{Diening:Harjulehto:Hasto:Ruzicka:2011}.

The present paper assumes the following standing assumptions, which guarantee that there are no undue complications in understanding the existence and uniqueness of solutions for the forward problem.
\begin{assumption}[Standing assumptions]
We consider a one-dimensional interval of positive, but finite, length, i.e.\ $-\infty < a < b < \infty$.

There exists $\eps > 0$ with the following holding almost everywhere on the interval $[a, b]$:  $0 < \eps < \gamma(x) < 1/\eps$ and $1 + \eps < p(x) < 1/\eps$.
We write the assumption that $\gamma$ is essentially bounded and essentially bounded from below as $\gamma \in L^{\infty}_+$.
\end{assumption}
We then have:
\begin{enumerate}
\item The equation~\eqref{eq:forward} has a unique solution~$u(x)$.
\item The map $m \mapsto K_m$ defined by equation~\eqref{eq:K_m} is well-defined, strictly increasing, continuous bijection (from $\R_+$ to itself) with a continuous inverse.
\item The DN map defined by equation~\eqref{eq:DN_map} generalizes the usual DN~map in Calderón's problem and Calderón's problem for $p$-Laplace equation with constant~$p$ -- the DN maps are equal if $p(x)$ is constant (up to null sets).
\end{enumerate}
The paper~\cite{Brander:Winterrose:2019} did not use the observation $\Lambda^p_{\gamma} = m K_m$; the observation would likely simplify the arguments therein.

We also use the notation
\begin{align}
p^+ &= \esssup_{a \le x \le b} p(x) & \text{ and}\\
p^- &= \essinf_{a \le x \le b} p(x).
\end{align}
From the standing assumptions it follows that $1 < p^- \le p^+ < \infty$.

The following lemma states that if two functions are equimeasurable, and the same bijection acts on both of them, the composed functions are still equimeasurable.
It is needed when proving some of our main results.

\begin{lemma}
\label{lemma:equi_bijection}
Suppose that $f,g:X\rightarrow[0,\infty)$ are equimeasurable with
respect to the measures $\mu_{1}$ and $\mu_{2}$ in the sense that
\[
\mu_{1}(\left\{ x\in X;f(x)>t\right\} )=\mu_{2}(\left\{ x\in X;g(x)>t\right\} )\quad\text{for all }t\in[0,\infty).
\]
Suppose also that $\mu_{1}(X)=\mu_{2}(X)$. Let $h:I\rightarrow[0,\infty)$
be strictly monotonous and continuous, where $I$ is an interval that
contains the images of $f$ and $g$. Then $h\circ f$ and $h\circ g$
are also equimeasurable with respect to the measures $\mu_{1}$ and
$\mu_{2}$ (in the above sense).
\end{lemma}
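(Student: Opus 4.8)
The plan is to fix a threshold $s \in [0,\infty)$ and to compute $\mu_1(\{\, x : h(f(x)) > s \,\})$ directly in terms of the distribution function of $f$ with respect to $\mu_1$ (and symmetrically for $g$, $\mu_2$); the hypotheses then force the two numbers to agree, which is exactly equimeasurability of $h\circ f$ and $h \circ g$. The structural observation that makes this work is that, since $h$ is continuous and strictly monotone, the super-level set $E_s := h^{-1}((s,\infty)) = \{\, y \in I : h(y) > s \,\}$ is a subinterval of $I$ of a very restricted form: it is relatively open in $I$ (by continuity) and it is an up-set or a down-set (by monotonicity). Hence, if $h$ is increasing, $E_s$ is $\emptyset$, all of $I$, or $\{\, y \in I : y > c_s \,\}$ with $c_s := \inf E_s \in \R$, where relative openness forces $c_s \notin E_s$; if $h$ is decreasing the mirror statement holds with $\{\, y \in I : y < c_s \,\}$ and $c_s := \sup E_s$.

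Because the images of $f$ and $g$ lie in $I$, we have $\{\, x : h(f(x)) > s \,\} = f^{-1}(E_s)$ and the constraint $f(x)\in I$ is automatic, so this set is $X$, $\emptyset$, $\{\, x : f(x) > c_s \,\}$ (for increasing $h$), or $\{\, x : f(x) < c_s \,\}$ (for decreasing $h$), and likewise for $g$ with the same $E_s$ and $c_s$. In the first two cases the $\mu_1$- and $\mu_2$-measures trivially agree, being $\mu_1(X)=\mu_2(X)$ or $0$. In the increasing case, $\mu_1(\{f > c_s\})$ equals $\mu_1(X)$ if $c_s < 0$ (since $f \ge 0$) and otherwise equals the value of the distribution function of $f$ at $c_s$; either way it matches the corresponding quantity for $g$, by $\mu_1(X)=\mu_2(X)$ respectively by equimeasurability.

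The decreasing case is the one delicate point, and I expect it to be the main obstacle. Here one must pass to a complement: $\{f < c_s\} = X \setminus \{f \ge c_s\}$ (with $c_s \le 0$ giving $\emptyset$ at once), and $\{f \ge c_s\} = \bigcap_n \{f > t_n\}$ for any $t_n \uparrow c_s$, so $\mu_1(\{f \ge c_s\}) = \lim_n \mu_1(\{f > t_n\})$ by continuity of measure, which equals $\lim_n \mu_2(\{g > t_n\}) = \mu_2(\{g \ge c_s\})$ by equimeasurability; subtracting from $\mu_1(X)=\mu_2(X)$ closes the argument. This is precisely the step that uses the hypothesis $\mu_1(X) = \mu_2(X)$, which does not follow from equimeasurability alone, and the subtraction is harmless when the total masses are finite — as in our application, where $\mu_1 = \mu_2$ is Lebesgue measure on $[a,b]$; otherwise one treats separately the degenerate case where the distribution function of $f$ is identically $+\infty$ just below $c_s$. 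The structural description of $E_s$, the increasing case, and the edge cases $E_s \in \{\emptyset, I\}$ or $c_s < 0$ are all routine.
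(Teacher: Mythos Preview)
Your proof is correct and follows essentially the same route as the paper's: split into the increasing and decreasing cases, handle the former directly via the structure of $h^{-1}((s,\infty))$, and in the latter pass to complements and invoke continuity of measure to deal with the non-strict inequality. The only cosmetic difference is that the paper first proves $\mu_1(\{h\circ f<\tau\})=\mu_2(\{h\circ g<\tau\})$ and then takes the limit in $\tau$, whereas you take the limit in the threshold for $f$ directly; your treatment of the infinite-mass subtlety is also slightly more explicit.
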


\begin{proof}
Suppose first that $h$ is strictly increasing. Let $\tau \geq 0$.
If $\tau$ is in the image of $h$, there is $t$ such that $\tau=h(t)$.
Then, since $h$ is strictly increasing, we have by the equimeasurability assumption 
\begin{align*}
\mu_{1}(\left\{ x\in X;h\circ f(x)>\tau\right\} )= & \mu_{1}(\left\{ x\in X;h(f(x))>h(t)\right\} )\\
= & \mu_{1}(\left\{ x\in X;f(x)>t\right\} )\\
= & \mu_{2}(\left\{ x\in X;g(x)>t\right\} )\\
= & \mu_{2}(\left\{ x\in X;h\circ g(x)>\tau\right\} ).
\end{align*}
If $\tau$ is not in the image of $h$, then, since the image is a
connected set, we have either $\tau>h(t)$ for all $t\in I$ or $\tau<h(t)$
for all $t\in I$. Consequently we have either
\[
\mu_{1}(\left\{ x\in X;h\circ f(x)>\tau\right\} )=\mu_{1}(X)=\mu_{2}(X)=\mu_{2}(\left\{ x\in X;h\circ g(x)>\tau\right\} )
\]
or
\[
\mu_{1}(\left\{ x\in X;h\circ f(x)>\tau\right\} )=\mu_{1}(\emptyset)=\mu_{2}(\emptyset)=\mu_{2}(\left\{ x\in X;h\circ g(x)>\tau\right\} ).
\]
This completes the proof for strictly increasing $h$.

Suppose now that $h$ is strictly decreasing.
Then, by the same argument as above, for all $\tau \ge 0$,
\begin{equation}
\label{eq:ineq}
\mu_1 \parens{\left\{ x\in X ; h \circ f(x) < \tau \right\}} = \mu_2 \parens{\left\{ x\in X ; h \circ g(x) < \tau \right\}}.
\end{equation}
But now, by dominated convergence and for all $\tau \ge 0$,
\begin{equation}
\begin{split}
\mu_1 &\parens{\left\{ x\in X ; h \circ f(x) > \tau \right\}} = \mu_1 \parens{X} - \mu_1 \parens{\left\{ x\in X ; h \circ f(x) \le \tau \right\}} \\
&= \mu_1 \parens{X} - \lim_{\eps \to 0} \mu_1 \parens{\left\{ x\in X ; h \circ f(x) < \tau + \eps \right\}}.
\end{split}
\end{equation}
The same reasoning applies to $g$ with $\mu_2$, and by equation~\eqref{eq:ineq}, this finishes the proof.
\end{proof}

\section{Limits of voltage difference}
\label{sec:limits}
In this section we consider the situation where the difference of Dirichlet values~$m$ approaches zero or infinity. In this case we can learn something about the maximum or minimum of $p$, respectively.

\begin{proposition}
\label{prop:growth_est}
Suppose that $\gamma \in L^{\infty}_{+}$. Assume that $m\leq1$ is so small that $K_{m}\leq1$. Then for any $\varepsilon>0$ there is a constant $C=C(\gamma,p,b-a,\varepsilon)$
such that
\[
\frac{1}{C}m^{p^{+}-1}\leq K_{m}\leq Cm^{p^{+}-1-\varepsilon}
\]
and
\begin{equation}
\frac{1}{C}m^{p^{+}}\leq\varLambda_{\gamma}^{p}(m)\le Cm^{p^{+}-\varepsilon}.
\end{equation}
Moreover, if $p(x)$ reaches its essential supremum in a set of positive measure, then these estimates hold for $\varepsilon = 0$.
\end{proposition}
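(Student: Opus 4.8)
The plan is to work directly from the defining equation~\eqref{eq:K_m} for $K_m$, namely $m = \int_a^b \gamma^{-1/(p(x)-1)} K_m^{1/(p(x)-1)} \der x$, and extract matching upper and lower bounds on the integral by exploiting the monotonicity of $t \mapsto K_m^{t}$ for $K_m \le 1$ (decreasing in $t$) and the standing assumption $1 + \eps < p(x) < 1/\eps$, which bounds the exponent $1/(p(x)-1)$ into a compact subinterval of $(0,\infty)$. Since $0 < K_m \le 1$, we have $K_m^{1/(p(x)-1)} \le K_m^{1/(p^+ - 1)}$ pointwise, and $\gamma^{-1/(p(x)-1)}$ is bounded above and below by the $\gamma$-bounds in the standing assumption; this immediately yields $m \le C_1 K_m^{1/(p^+-1)}$, hence the lower bound $K_m \ge C_1^{-(p^+-1)} m^{p^+-1}$, which is the left inequality (with $\eps = 0$, in fact).

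For the upper bound on $K_m$ one needs a reverse estimate, and here the $\eps$ genuinely enters when the essential supremum is \emph{not} attained on a positive-measure set. The idea: fix a small $\delta > 0$ (to be related to $\eps$) and split $[a,b]$ into the ``near-top'' set $E_\delta = \{x : p(x) > p^+ - \delta\}$ and its complement. On $E_\delta$, which has positive measure by definition of essential supremum, we bound $K_m^{1/(p(x)-1)} \ge K_m^{1/(p^+ - \delta - 1)}$ (again using $K_m \le 1$), obtaining $m \ge c\, \abs{E_\delta}\, K_m^{1/(p^+-\delta-1)}$; this gives $K_m \le C\, m^{p^+ - \delta - 1}$. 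Choosing $\delta$ small enough that $p^+ - \delta - 1 \ge p^+ - 1 - \eps$ — concretely $\delta \le \eps$ — yields $K_m \le C m^{p^+ - 1 - \eps}$ since $m \le 1$ and smaller exponents give larger powers. The contribution of the complement of $E_\delta$ to the integral is nonnegative and only helps the lower-on-$m$ direction, so it can be discarded in this estimate. When $p(x) = p^+$ on a set $F$ of positive measure, one simply takes $E = F$ and $\delta = 0$ throughout, giving $K_m \le C m^{p^+-1}$, i.e.\ the $\eps = 0$ case.

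The estimates for $\Lambda_\gamma^p(m) = m K_m$ follow by multiplying the $K_m$ bounds by $m$: the lower bound $K_m \ge C^{-1} m^{p^+-1}$ gives $\Lambda_\gamma^p(m) \ge C^{-1} m^{p^+}$, and the upper bound gives $\Lambda_\gamma^p(m) \le C m^{p^+-\eps}$, and similarly with $\eps = 0$ in the positive-measure case. The only subtlety to handle carefully is the constant's dependence: $c$ and $C$ depend on the $\gamma$-bounds (through $\eps$ in the standing assumption), on $p$ through $p^+$ and through $\abs{E_\delta}$, and on $b - a$, which matches the claimed dependence $C = C(\gamma, p, b-a, \eps)$. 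The main obstacle — really the only place requiring thought rather than bookkeeping — is producing the reverse inequality: one must restrict the integral to a set where $p$ is close to $p^+$ and argue that this set has positive measure, which is exactly the definition of essential supremum, and then verify that the exponent loss incurred is controlled by $\eps$; the positive-measure hypothesis in the final sentence is precisely what removes this loss.
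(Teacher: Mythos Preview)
Your proposal is correct and follows essentially the same approach as the paper: bound the integrand in~\eqref{eq:K_m} pointwise by $K_m^{1/(p^+-1)}$ for the lower bound, restrict to the near-top set $\{p(x)\ge p^+-\varepsilon\}$ (your $E_\delta$) for the upper bound, and then multiply by $m$ for the $\Lambda_\gamma^p$ estimates. The only cosmetic difference is that the paper takes $\delta=\varepsilon$ directly rather than introducing an auxiliary $\delta\le\varepsilon$.
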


\begin{proof}

Since $\gamma$ is bounded away from zero and $m \leq 1$ is so small that $K_m \leq 1$, the definition of $K_m$ in \eqref{eq:K_m} implies
\begin{align*}
m & = \int_{a}^{b}\gamma^{-1/(p(x)-1)}(x)K_{m}^{\frac{1}{p(x)-1}}\der x\\
& \leq C(\gamma,p)\int_{a}^{b}K_{m}^{\frac{1}{p^{+}-1}}\der x\\
& \leq C(\gamma,p,b-a)K_{m}^{\frac{1}{p^{+}-1}}\der x,
\end{align*}
so that
\begin{equation}
K_{m}\geq C(\gamma,p,b-a)m^{p^{+}-1}.\label{eq:growth_est 1}
\end{equation}
For the other direction, let $\varepsilon>0$. Then, since $\gamma$
is bounded and $K_{m}$ is non-negative, we have
\begin{align*}
m & \geq C(\gamma,p)\int_{a}^{b}K_{m}^{\frac{1}{p(x)-1}}\der x\\
& \geq C(\gamma,p)\int_{\left\{ x \in [a,b] ;p(x)\geq p^{+}-\varepsilon\right\} }K_{m}^{\frac{1}{p^{+}-1-\varepsilon}}\der x\\
& = C(\gamma,p)\left|\left\{ x\in[a,b];p(x)\geq p^{+}-\varepsilon\right\} \right|K_{m}^{\frac{1}{p^{+}-1-\varepsilon}}.
\end{align*}
By the definition of $p^{+}$, the set $\left\{ x\in[a,b];p(x)\geq p^{+}-\varepsilon\right\} $
has a positive measure for all $\varepsilon>0$. Hence the above implies
that
\begin{equation}
K_{m}\leq C(\gamma,p,\varepsilon)m^{p^{+}-1-\varepsilon}.\label{eq:growth_est 2}
\end{equation}
Combining \eqref{eq:growth_est 1} and \eqref{eq:growth_est 2} we
arrive at the first estimate of the proposition.

The second estimate follows immediately from the identity $\Lambda_{\gamma}^{p}(m) = m K_m$.
To prove the final claim, simply repeat the above proof with $\varepsilon = 0$.
\end{proof}


\begin{lemma} Assume that $\gamma \in L^{\infty}_{+}$. Then
\label{lma:recover_sup_inf}
\begin{equation}
p^+ = \sup\joukko{q > 0 ; \lim_{m \to 0}m^{-q} \Lambda_\gamma^p = 0 } = \inf\joukko{ q > 0; \lim_{m \to 0} m^{-q}\Lambda_\gamma^p(m) = \infty }.
\end{equation}
\end{lemma}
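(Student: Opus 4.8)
The plan is to derive Lemma~\ref{lma:recover_sup_inf} directly from the two-sided bound on $\Lambda_\gamma^p(m)$ in Proposition~\ref{prop:growth_est}, namely $\tfrac1C m^{p^+} \le \Lambda_\gamma^p(m) \le C m^{p^+-\varepsilon}$ for small $m$, valid for every $\varepsilon>0$ (with $\varepsilon=0$ permitted in the upper bound when the essential supremum is attained on a set of positive measure, though we will not need that refinement here). Write
\[
S := \sup\joukko{q > 0 ; \lim_{m \to 0} m^{-q}\Lambda_\gamma^p(m) = 0}, \qquad
T := \inf\joukko{q > 0 ; \lim_{m \to 0} m^{-q}\Lambda_\gamma^p(m) = \infty}.
\]
I would prove $S \ge p^+$, then $T \le p^+$, and finally the trivial inequality $S \le T$, which forces $S = T = p^+$.

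For $S \ge p^+$: fix any $q < p^+$ and set $\varepsilon := (p^+ - q)/2 > 0$. The upper bound gives $m^{-q}\Lambda_\gamma^p(m) \le C m^{p^+ - \varepsilon - q} = C m^{(p^+-q)/2} \to 0$ as $m \to 0$, since the exponent $(p^+-q)/2$ is positive. Hence every $q < p^+$ lies in the set defining $S$, so $S \ge p^+$. For $T \le p^+$: fix any $q > p^+$ and use the lower bound $\Lambda_\gamma^p(m) \ge \tfrac1C m^{p^+}$, so that $m^{-q}\Lambda_\gamma^p(m) \ge \tfrac1C m^{p^+ - q} \to \infty$ as $m \to 0$, since $p^+ - q < 0$. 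Hence every $q > p^+$ lies in the set defining $T$, giving $T \le p^+$. Finally, if $q_1 < q_2$ and $m^{-q_1}\Lambda_\gamma^p(m) \to 0$, then $m^{-q_2}\Lambda_\gamma^p(m) = m^{q_1 - q_2}\bigl(m^{-q_1}\Lambda_\gamma^p(m)\bigr)$; the first factor blows up and the second vanishes, but in fact it is cleaner to observe directly that the two defining sets are an up-set and a down-set respectively and cannot overlap—if some $q$ belonged to both, then $m^{-q}\Lambda_\gamma^p(m)$ would simultaneously tend to $0$ and to $\infty$, which is absurd—so $S \le T$. Combining, $p^+ \le S \le T \le p^+$, which proves the claimed equalities.

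There is essentially no obstacle here: the lemma is a soft packaging of the quantitative estimate already proven, and all three inequalities are one-line consequences. The only point requiring a sentence of care is that $S$ and $T$ are genuinely characterized by these one-sided limits—one should note that for $q < p^+$ the limit is $0$ (not merely bounded) and for $q > p^+$ it is $+\infty$ (not merely unbounded), both of which the displayed bounds give outright. It is also worth remarking, if desired, that at $q = p^+$ itself the two bounds only pin $m^{-p^+}\Lambda_\gamma^p(m)$ between $1/C$ and $C m^{-\varepsilon}$, so the borderline behaviour is not determined; this is consistent with $p^+$ being exactly the threshold value $S = T$ and need not be resolved.
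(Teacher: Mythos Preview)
Your proof is correct and follows essentially the same route as the paper: both arguments apply Proposition~\ref{prop:growth_est} to show that $m^{-q}\Lambda_\gamma^p(m)\to 0$ for $q<p^+$ and $\to\infty$ for $q>p^+$, from which the two characterizations of $p^+$ follow immediately. Your extra $S\le T$ step via disjointness is fine but not needed, since the two limit computations already force the $S$-set into $(0,p^+]$ and the $T$-set into $[p^+,\infty)$.
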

\begin{proof}
If $q>p^{+}$, then $q=p^{+}+\varepsilon$ for some $\varepsilon>0$,
and so by proposition~\ref{prop:growth_est} we have
\[
m^{-q}\varLambda_{\gamma}^{p}(m)\geq Cm^{-p^{+}-\varepsilon}m^{p^{+}}=Cm^{-\varepsilon}\rightarrow\infty\text{ as } m\rightarrow0.
\]
If $q<p^{+}$, then $q=p^{+}-\varepsilon$ for some $\varepsilon>0$,
and so by Proposition \ref{prop:growth_est} we have
\[
m^{-q}\varLambda_{\gamma}^{p}(m)\leq Cm^{-p^{+}+\varepsilon}m^{p^{+}-\varepsilon/2}=Cm^{\varepsilon/2}\rightarrow0\text{ as }m\rightarrow0.\qedhere
\]
\end{proof}

We get similar results with $m$ large, but with $p^-$.

\begin{proposition}
Suppose that $\gamma \in L^{\infty}_{+}$. Assume that $m\ge1$ is so big that $K_{m}\ge1$.
Then for any $\varepsilon>0$ there is a constant $C=C(\gamma,p,b-a,\varepsilon)$
such that
\[
\frac{1}{C}m^{p^{-}-1}\leq K_{m}\leq Cm^{p^{-}-1+\varepsilon}
\]
and
\[
\frac{1}{C}m^{p^{-}}\leq\varLambda_{\gamma}^{p}(m)\le Cm^{p^{-}+\varepsilon}.
\]
Moreover, if $p(x)$ reaches its essential infimum in a set of positive measure, then these estimates hold for $\varepsilon = 0$.
\end{proposition}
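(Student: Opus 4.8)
The plan is to run the proof of Proposition~\ref{prop:growth_est} almost verbatim, interchanging the roles of $p^{+}$ and $p^{-}$ and reversing every inequality comparing powers of $K_{m}$, since here $K_{m}\ge 1$ rather than $K_{m}\le 1$. First I would note that the hypothesis is not vacuous: by the preliminaries $m\mapsto K_{m}$ is an increasing bijection of $\R_{+}$, so $K_{m}\ge 1$ whenever $m$ is large enough. As in Proposition~\ref{prop:growth_est}, the standing assumptions make $\gamma^{-1/(p(x)-1)}$ bounded above and below by positive constants depending only on $\gamma$ and $p$, which I absorb into the constants $C$.

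For the lower bound on $K_{m}$, I would start from the defining identity~\eqref{eq:K_m}. Since $K_{m}\ge 1$ and $1/(p(x)-1)\le 1/(p^{-}-1)$ pointwise, we have $K_{m}^{1/(p(x)-1)}\le K_{m}^{1/(p^{-}-1)}$, hence
\[
m=\int_{a}^{b}\gamma^{-1/(p(x)-1)}\,K_{m}^{1/(p(x)-1)}\der x\le C(\gamma,p,b-a)\,K_{m}^{1/(p^{-}-1)},
\]
which rearranges to $K_{m}\ge C^{-1}m^{p^{-}-1}$. For the upper bound, fix $\varepsilon>0$ and restrict the integral to $E_{\varepsilon}:=\joukko{x\in[a,b];\,p(x)\le p^{-}+\varepsilon}$. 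On $E_{\varepsilon}$ one has $1/(p(x)-1)\ge 1/(p^{-}-1+\varepsilon)$, and since $K_{m}\ge 1$ this gives $K_{m}^{1/(p(x)-1)}\ge K_{m}^{1/(p^{-}-1+\varepsilon)}$, so
\[
m\ge C(\gamma,p)\,\abs{E_{\varepsilon}}\,K_{m}^{1/(p^{-}-1+\varepsilon)}.
\]
By the definition of $p^{-}$ the set $E_{\varepsilon}$ has positive measure for every $\varepsilon>0$, so $K_{m}\le C(\gamma,p,\varepsilon)\,m^{p^{-}-1+\varepsilon}$. Combining the two bounds yields the estimate for $K_{m}$, and the estimate for $\Lambda_{\gamma}^{p}(m)$ follows at once from $\Lambda_{\gamma}^{p}(m)=mK_{m}$.

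For the final claim, if the set $\joukko{x\in[a,b];\,p(x)=p^{-}}$ has positive measure, I would run the upper-bound argument with $\varepsilon=0$, restricting to that set in place of $E_{\varepsilon}$. I do not expect a genuine obstacle; the one point needing care is that the monotonicity of $s\mapsto K_{m}^{s}$ is \emph{reversed} relative to Proposition~\ref{prop:growth_est}, precisely because $K_{m}\ge 1$ here, and one must check — as above — that $E_{\varepsilon}$, and the set on which the infimum is attained, have positive measure, so that $C$ may legitimately be taken to depend only on $\gamma$, $p$, $b-a$, and $\varepsilon$.
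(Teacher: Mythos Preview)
Your proposal is correct and follows exactly the approach the paper intends: the paper's own proof simply reads ``As the proof of proposition~\ref{prop:growth_est}, but with $m$ large and $p$ estimated by $p^-$,'' and you have faithfully carried out that substitution, correctly noting that the monotonicity of $s\mapsto K_m^{s}$ reverses because $K_m\ge 1$.
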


\begin{proof}
As the proof of proposition~\ref{prop:growth_est},
but with $m$ large and $p$ estimated by $p^-$.
\end{proof}
The proof of the following lemma, too, is similar to previous proofs.

\begin{lemma} Assume that $\gamma \in L^{\infty}_{+}$. Then
\label{lma:recover_sup_inf_plus}
\begin{equation}
p^- = \inf\joukko{q > 0 ; \lim_{m \to \infty}m^{-q} \Lambda_\gamma^p = 0 } = \sup\joukko{ q > 0; \lim_{m \to \infty} m^{-q}\Lambda_\gamma^p(m) = \infty }.
\end{equation}
\end{lemma}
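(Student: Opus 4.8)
The plan is to mirror the proof of Lemma~\ref{lma:recover_sup_inf}, simply replacing the small-$m$ asymptotics near $p^+$ by the large-$m$ asymptotics near $p^-$ furnished by the preceding proposition (the large-$m$ counterpart of Proposition~\ref{prop:growth_est}). Fix $q > 0$ and compare it with $p^-$.

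First I would treat the case $q > p^-$. Write $q = p^- + \varepsilon$ with $\varepsilon > 0$ and apply the preceding proposition with parameter $\varepsilon/2$: for all sufficiently large $m$,
\[
m^{-q}\Lambda_\gamma^p(m) \le C m^{-p^- - \varepsilon} m^{p^- + \varepsilon/2} = C m^{-\varepsilon/2} \to 0 \quad\text{as } m \to \infty.
\]
Hence every $q > p^-$ belongs to $\joukko{q > 0 ; \lim_{m\to\infty} m^{-q}\Lambda_\gamma^p = 0}$ and, in particular, not to $\joukko{q > 0 ; \lim_{m\to\infty} m^{-q}\Lambda_\gamma^p(m) = \infty}$.

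Next I would treat $q < p^-$. Write $q = p^- - \varepsilon$ with $\varepsilon > 0$ and use the lower bound $\Lambda_\gamma^p(m) \ge C^{-1} m^{p^-}$ from the same proposition: for large $m$,
\[
m^{-q}\Lambda_\gamma^p(m) \ge C^{-1} m^{-p^- + \varepsilon} m^{p^-} = C^{-1} m^{\varepsilon} \to \infty \quad\text{as } m \to \infty.
\]
So every $q < p^-$ belongs to the second set and not to the first. Combining the two cases: the set $\joukko{q>0;\lim_{m\to\infty} m^{-q}\Lambda_\gamma^p = 0}$ contains $(p^-,\infty)$ and is disjoint from $(0,p^-)$, so its infimum equals $p^-$; the set $\joukko{q>0;\lim_{m\to\infty} m^{-q}\Lambda_\gamma^p(m) = \infty}$ contains $(0,p^-)$ and is disjoint from $(p^-,\infty)$, so its supremum equals $p^-$.

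There is essentially no obstacle; the only points to watch are that, since now $m\to\infty$, a positive leftover exponent drives the quantity to $\infty$ and a negative one to $0$ — the reverse of the $m\to0$ situation in Lemma~\ref{lma:recover_sup_inf} — and that the splitting of $\varepsilon$ must be arranged so that the surviving exponent has a definite sign, as done above. The value of $m^{-q}\Lambda_\gamma^p$ exactly at $q = p^-$ plays no role, since only an infimum and a supremum are taken.
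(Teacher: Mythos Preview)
Your proof is correct and follows exactly the approach the paper intends: it mirrors Lemma~\ref{lma:recover_sup_inf}, replacing the small-$m$ bounds from Proposition~\ref{prop:growth_est} by the large-$m$ bounds from the subsequent proposition, with the roles of upper and lower estimates swapped appropriately.
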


Now we can prove one of the inverse problem theorems.
\begin{proof}[Proof of theorem~\ref{thm:m_limits}]
Lemmas~\ref{lma:recover_sup_inf_plus} and~\ref{lma:recover_sup_inf} provide $p^-$ and $p^+$.

Suppose now $\joukko{x \in [a,b] ; p(x) = p^-}$ has positive measure, with the intention of taking $m \to \infty$. The other case is similar.
\begin{equation}
\begin{split}
m^{-p^-} \Lambda_\gamma^p(m) &= m^{-p^-+1}K_m \\
&= K_m\sulut{\int_a^b \gamma^{-1/(p(x)-1)} K_m^{1/(p(x)-1)} \der x}^{-p^-+1} \\
&= \sulut{\int_a^b \gamma^{-1/(p(x)-1)} K_m^{-1/(p^--1)+1/(p(x)-1)} \der x}^{-p^-+1} \\
&= \sulut{\int_{\joukko{x \in [a,b]; p(x) = p^-}} \gamma^{-1/(p^--1)} \der x}^{-p^-+1} \\
&+ \sulut{\int_{\joukko{x \in [a,b]; p(x) > p^-}} \gamma^{-1/(p(x)-1)} K_m^{-1/(p^--1)+1/(p(x)-1)} \der x}^{-p^-+1}.
\end{split}
\end{equation}
The second integral vanishes by dominated convergence as $m \to \infty$, since then also $K_m \to \infty$.
\end{proof}

\section{Proof via Müntz-Szász}
\label{sec:muntz}
We know~\cite[proposition~26]{Brander:Winterrose:2019} that, for $n \in \N \cup \joukko{0}$, the quantities
\begin{equation}    
\int_a^b \gamma^{-1/(p(x)-1)} \parens{\frac{1}{p(x)-1}}^n \der x,
\end{equation}
can be recovered constructively from the DN~map and its derivatives with respect to~$m$.
Suppose $\gamma \equiv 1$.
Then what can be recovered are essentially the $L^n$-norms $\norm{\frac{1}{p(x)-1}}_{L^n([a, b])}^n$.
We write the weighted $L^n$-space with weight~$f$ as $L^n([a, b], f(x)\der x)$, and omit the weight when $f \equiv 1$ almost everywhere.

\begin{proposition}
\label{prop:Lpnorms}
The following $L^n$-norms are determined constructively by the DN~map:
\begin{equation}
\norm{\frac{1}{p(x)-1}}_{L^n([a, b], \gamma^{-1/(p(x)-1)}\der x)}^n.\label{eq:weighted_lpnorms}
\end{equation}
If $\gamma \equiv 1$, we get instead
\begin{equation}
\norm{\frac{1}{p(x)-1}}_{L^n([a, b])}^n.
\end{equation}
\end{proposition}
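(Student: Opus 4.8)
The plan is to read the statement off directly from \cite[proposition~26]{Brander:Winterrose:2019}, which we have just quoted: it asserts that the numbers
\begin{equation}
I_n := \int_a^b \gamma^{-1/(p(x)-1)} \parens{\frac{1}{p(x)-1}}^n \der x, \qquad n \in \N \cup \joukko{0},
\end{equation}
are recovered constructively from the DN~map together with its derivatives in~$m$. The only work left is to identify $I_n$ with the $n$-th power of the claimed weighted norm.

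First I would invoke the standing assumptions. They give $p(x) > 1 + \eps > 1$ almost everywhere, so $x \mapsto \frac{1}{p(x)-1}$ is strictly positive (indeed bounded and bounded away from zero), hence coincides with its own absolute value almost everywhere. They also give $\eps < \gamma(x) < 1/\eps$, so $\gamma^{-1/(p(x)-1)}$ is strictly positive, bounded, and bounded away from zero; since $[a, b]$ has finite length, $\gamma^{-1/(p(x)-1)}\der x$ is a finite, non-degenerate measure and the associated $L^n$-spaces are the usual ones. Consequently, for each $n \ge 1$,
\begin{equation}
I_n = \int_a^b \abs{\frac{1}{p(x)-1}}^n \gamma^{-1/(p(x)-1)} \der x = \norm{\frac{1}{p(x)-1}}_{L^n([a, b],\, \gamma^{-1/(p(x)-1)}\der x)}^n,
\end{equation}
which is \eqref{eq:weighted_lpnorms}. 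Specialising to $\gamma \equiv 1$, the weight becomes identically~$1$ and $I_n$ collapses to $\int_a^b \abs{\frac{1}{p(x)-1}}^n \der x = \norm{\frac{1}{p(x)-1}}_{L^n([a, b])}^n$, which is the second claim.

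I do not anticipate any genuine obstacle here: the substance is entirely contained in the cited proposition, and the present statement is merely its reformulation in the language of ($\gamma$-weighted) $L^n$-norms. The one step that uses the hypotheses rather than being a pure notational rewrite is the insertion of the absolute value into the integrand, which is justified by the positivity of $1/(p(x)-1)$ under the standing assumptions. (For $n = 0$ the quantity $I_0 = \int_a^b \gamma^{-1/(p(x)-1)}\der x$ is still recovered, but it is the total mass of the weight rather than a norm, so that case is simply excluded from the statement.)
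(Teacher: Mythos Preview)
Your proposal is correct and matches the paper's treatment exactly: the paper does not even supply a separate proof for this proposition, since it is stated immediately after recalling \cite[proposition~26]{Brander:Winterrose:2019} as a direct reformulation of those recovered integrals in the language of (weighted) $L^n$-norms. Your explicit justification of the absolute value via the standing assumptions and your remark on the $n=0$ case are accurate and, if anything, more detailed than what the paper provides.
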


In lemma~\ref{lem:lp_uniqueness}, we show that sufficiently many $L^n$-norms of a function uniquely determine its distribution function with respect to the underlying measure. Combined with the previous proposition, this allows us to recover the distribution function of $p(x)$, given a constant~$\gamma$.
If $\gamma$ is not constant, then the previous result is still true, but the distribution function will be with respect to a measure that depends on the unknown power~$p$.
This still gives a restatement of the original problem, but not a satisfactory characterization of the exponents~$p$ which give the same DN~map.

Recently Klun~\cite{Klun:2019} and Erdélyi~\cite{Erdelyi} proved that the equality of $L^n$ norms implies the equimeasurability of the functions. However, if $\gamma$ is not identically one, then we only know the weighted $L^n$ norms~\eqref{eq:weighted_lpnorms}, where the weight depends on the unknown power $p$. For this reason we need the slightly more general statement of lemma~11 with the two different weights. The proof resembles Klun's, but we have nevertheless included it for the benefit of the reader.

To prove lemma~\ref{lem:lp_uniqueness} we use the
following M\"untz-Sz\'asz theorem (see Theorem 12.4.4 in \cite[page~235]{Boas:1954}). For an introduction to the Müntz-Szász theorem we refer to the review by Almira~\cite{Almira:2007} and the monograph of Borwein and Erdélyi~\cite{Borwein:Erdelyi:1995}.
We are aware of some previous use of Müntz-Szász theorem in unrelated inverse problems~\cite{Janno:Wolfersdorf:1996,ElBadia:HaDuong:2002,Ling:Takeuchi:2009,Kulbay:Mukanova:Sebu:2017,Rundell:Zhang:2017}.


\begin{thm}
\label{thm:munz}Suppose that $0<\lambda_{1}<\lambda_{2}<\ldots$
is a sequence of real numbers such that $\sum_{n=1}^{\infty}1/\lambda_{n}=\infty$.
Assume that $h$ is Lebesgue integrable in $(0,M)$, $M>0$, and 
\[
\int_{0}^{M}t^{\lambda_{n}}h(t)\der t=0\quad\text{for all }n\in\mathbb{N}.
\]
Then $h(t)=0$ almost everywhere in $(0,M)$.
\end{thm}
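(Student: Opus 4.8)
\emph{Proof proposal.} The plan is to run the classical complex-analytic argument for the Müntz--Szász theorem: turn the orthogonality hypothesis into the vanishing of a bounded holomorphic function at a sequence of points so dense near the boundary that the Blaschke condition fails. After the harmless normalisation replacing $h(t)$ by $t\mapsto h(Mt)$, one reduces to $M=1$. I would then introduce
\[
F(z)=\int_{0}^{1}t^{z}h(t)\der t,\qquad \real z\ge 0.
\]
Because $\abs{t^{z}}=t^{\real z}\le 1$ on $(0,1)$ when $\real z\ge 0$, the integral converges absolutely with $\abs{F(z)}\le\norm{h}_{L^{1}(0,1)}$, so $F$ is bounded on the right half-plane; it is holomorphic on the open half-plane, which I would check via Morera's theorem combined with Fubini (the exchange of an integral over a triangle with the $t$-integral is legitimate since $\abs{t^{z}}\le 1$), or by differentiating under the integral sign on each strip $\joukko{\real z\ge\sigma}$, $\sigma>0$, using the bound $t^{\sigma}\abs{\log t}\le 1/(\sigma e)$. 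The hypothesis says exactly that $F(\lambda_{n})=0$ for all $n$.

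The heart of the argument is to deduce $F\equiv 0$. Let $\phi(z)=(z-1)/(z+1)$ be the Cayley transform, a biholomorphism of the open right half-plane onto the unit disk $\mathbb{D}$, and set $G=F\circ\phi^{-1}$, a bounded holomorphic function on $\mathbb{D}$ vanishing at each $w_{n}:=\phi(\lambda_{n})$. If $G$ were not identically zero, Blaschke's theorem would force $\sum_{n}(1-\abs{w_{n}})<\infty$. But a short computation gives $1-\abs{w_{n}}=2\min(\lambda_{n},1)/(\lambda_{n}+1)$, which is at least $1/\lambda_{n}$ whenever $\lambda_{n}\ge 1$ and is bounded below by a fixed positive constant if the $\lambda_{n}$ stay bounded; in either case $\sum_{n}(1-\abs{w_{n}})=\infty$ by the hypothesis $\sum_{n}1/\lambda_{n}=\infty$ --- a contradiction. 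Hence $G\equiv 0$, so $F\equiv 0$ on the right half-plane. This is the only place the summability hypothesis is used.

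Finally, $F\equiv 0$ yields $\int_{0}^{1}t^{n}h(t)\der t=0$ for every positive integer $n$, hence $\int_{0}^{1}P(t)\,t\,h(t)\der t=0$ for every polynomial $P$; by the Weierstrass approximation theorem and $h\in L^{1}$ the same holds for every $P\in C[0,1]$, which forces $t\,h(t)=0$ almost everywhere, and therefore $h=0$ almost everywhere on $(0,1)$. Undoing the normalisation gives $h=0$ almost everywhere on $(0,M)$. I expect the only friction to be the routine verifications --- integrability near $t=0$, the interchange of limits establishing analyticity and boundedness of $F$, and keeping track of the finitely many $\lambda_{n}$ below $1$ --- while the genuine content sits entirely in the Blaschke-condition step.
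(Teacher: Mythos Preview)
The paper does not prove this theorem; it simply quotes it from Boas's \emph{Entire Functions} (Theorem~12.4.4) and uses it as a black box. Your proposal is a correct and complete proof, and it is precisely the classical complex-analytic argument one finds in Boas: form the bounded holomorphic function $F(z)=\int_0^1 t^z h(t)\der t$ on the right half-plane, transplant to the disk via the Cayley map, and invoke the Blaschke condition to force $F\equiv 0$; then conclude $h=0$ a.e.\ from the vanishing of all positive integer moments. The verifications you flag (analyticity via Morera/Fubini, the $1-\abs{w_n}$ computation, the finitely many $\lambda_n<1$) are indeed routine, and your final step is cleanest phrased as ``the signed measure $t\,h(t)\der t$ annihilates $C[0,1]$, hence is zero by Riesz representation''. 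Since the paper offers no proof of its own, there is nothing further to compare.
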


\begin{lemma}
\label{lem:lp_uniqueness}
Suppose that $1<n_{1}<n_{2}<\ldots$ is a sequence
of real numbers such that $\sum_{j=1}^{\infty}\frac{1}{n_{j}-1}=\infty.$
Let $f,g: X \rightarrow[0,\infty)$ be $\mu_1$ and $\mu_2$ measurable, respectively, where $\mu_i$ are finite measures on $X$.
Suppose also that $f$ and $g$ are bounded, and
\[
\int_{X} f^{n_{j}}\der \mu_1(x)
=
\int_{X} g^{n_{j}}\der \mu_2(x) \quad\text{for all }j\in\mathbb{N} \setminus \{0\}.
\]
 Then $\mu^f_1=\mu^g_2$ in $[0,\infty]$, where $\mu_{1}^{f}$ and $\mu_{2}^{g}$ denote distribution functions as defined in \eqref{eq:distribution_function}.
\end{lemma}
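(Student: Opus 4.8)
The plan is to convert the equality of the $L^{n_j}$-norms into a family of vanishing moments for the difference $\mu_1^f-\mu_2^g$ of the two distribution functions, and then apply the Müntz--Szász theorem (Theorem~\ref{thm:munz}) to conclude that this difference vanishes. First I would record the layer-cake (Cavalieri) identity: for a nonnegative measurable function $f$ and any exponent $n>0$, Tonelli's theorem gives
\[
\int_X f^{n}\der\mu_1 = \int_0^\infty \mu_1\parens{\joukko{x\in X; f(x)^n>s}}\der s = n\int_0^\infty t^{\,n-1}\mu_1^f(t)\der t,
\]
where the last step is the substitution $s=t^n$, using $f(x)^n>t^n\iff f(x)>t$. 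Since $f$ and $g$ are bounded, fix $M>0$ with $f\le M$ $\mu_1$-almost everywhere and $g\le M$ $\mu_2$-almost everywhere; then $\mu_1^f(t)=\mu_2^g(t)=0$ for $t\ge M$, so the integrals above are really over $(0,M)$. Applying the identity to both sides of the hypothesis with $n=n_j$ and cancelling the factor $n_j>0$ yields
\[
\int_0^M t^{\,n_j-1}\parens{\mu_1^f(t)-\mu_2^g(t)}\der t=0\qquad\text{for all }j\in\N\setminus\joukko{0}.
\]

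Next I would set $h(t)=\mu_1^f(t)-\mu_2^g(t)$ for $t\in(0,M)$ and $\lambda_j=n_j-1$. Because $n_j>1$ and $n_1<n_2<\cdots$, the numbers $\lambda_j$ are positive and strictly increasing, and $\sum_j 1/\lambda_j=\sum_j 1/(n_j-1)=\infty$ by assumption; moreover $\abs{h}\le \mu_1(X)+\mu_2(X)<\infty$, so $h$ is integrable on $(0,M)$. Theorem~\ref{thm:munz} then forces $h=0$ Lebesgue-almost everywhere on $(0,M)$, i.e.\ $\mu_1^f=\mu_2^g$ almost everywhere there.

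Finally I would upgrade this to equality everywhere on $[0,\infty]$. Distribution functions are non-increasing and right-continuous: as $s\downarrow t$ the sets $\joukko{x\in X; f(x)>s}$ increase to $\joukko{x\in X; f(x)>t}$, so continuity from below of the finite measure $\mu_1$ gives $\lim_{s\downarrow t}\mu_1^f(s)=\mu_1^f(t)$, and likewise for $\mu_2^g$ (this also covers $t=0$). Hence, for each $t_0\in[0,M)$ I pick a sequence $t_k\downarrow t_0$ with $t_k\in(0,M)$ and $h(t_k)=0$, and right-continuity of both distribution functions gives $h(t_0)=0$; for $t_0\in[M,\infty]$ both distribution functions already vanish. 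Therefore $\mu_1^f=\mu_2^g$ on all of $[0,\infty]$.

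The only slightly delicate points are the reduction to the bounded interval $(0,M)$ --- which is exactly what the boundedness hypothesis buys, and which is needed both for integrability of $h$ and to match the finite-interval formulation of Theorem~\ref{thm:munz} --- and the passage from almost-everywhere to pointwise equality of the distribution functions; neither is deep, and the argument closely parallels Klun's. I also note that allowing two distinct measures $\mu_1,\mu_2$ rather than a single one costs nothing, since the reasoning only ever compares $\mu_1^f$ with $\mu_2^g$ and never invokes any relation between $\mu_1$ and $\mu_2$ themselves (in particular $\mu_1(X)=\mu_2(X)$ is not needed).
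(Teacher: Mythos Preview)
Your proof is correct and follows essentially the same route as the paper's: rewrite each side via the layer-cake formula as $n_j\int_0^M t^{n_j-1}\mu_i^{\cdot}(t)\der t$, subtract, apply the M\"untz--Sz\'asz theorem (Theorem~\ref{thm:munz}) with $\lambda_j=n_j-1$ to the bounded function $h=\mu_1^f-\mu_2^g$, and then use right-continuity of distribution functions to pass from almost-everywhere to everywhere. Your write-up is in fact slightly more explicit than the paper's about why $h$ is integrable and about the endpoint $t=\infty$.
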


\begin{proof}
Let $M:=\max(\left\Vert f\right\Vert _{L^{\infty}},\left\Vert g\right\Vert _{L^{\infty}}).$
Then by Tonelli's theorem and a change of variables we have
\begin{align*}
\int_{X}f^{n_{j}}\der \mu_1(x) 
& =\int_{0}^{\infty} \mu_1 \parens{ \left\{ x\in X ;f(x)^{n_j}>t\right\} } \der t\\
 & =\int_{0}^{\infty}\mu_1\parens{ \left\{ x\in X;f(x)>t^{\frac{1}{n_j}}\right\} } \der t\\
 & =\int_{0}^{\infty}\mu^{f}_1(t^{\frac{1}{n_j}})\der t\\
 & =n_j \int_{0}^{\infty}t^{n_j-1}\mu^{f}_1 (t)\der t \\
 & =n_j \int_{0}^{M}t^{n_j-1}\mu^{f}_1 (t)\der t
\end{align*}
and similarly for $g$. Thus
\begin{align*}
0= & \int_{X} \parens{f^{n_j}-g^{n_j} } \der x
= n_j \int_{0}^{M}t^{n_j -1}\left(\mu^{f}_1(t)-\mu^{g}_2(t)\right)\der t
\end{align*}
for all $j\in\mathbb{N}$.
Hence, denoting $h(t) := \mu^{f}_1(t) - \mu^{g}_2(t)$,
we have $h$ integrable on $(0,M)$ and
\[
\int_{0}^{M}t^{n_j -1}h(t)\der t = 0\quad\text{for all }n\in\mathbb{N}.
\]
It follows now from theorem \ref{thm:munz} that $h(t)=0$ for almost
every $t\in(0,M)$. Since $\mu_{f}$ and $\mu_{g}$ are continuous
from the right \cite[page~37]{Bennett:Sharpley:1988}, so is $h$, and consequently
$h\equiv0$ on $[0,M]$. It follows that $\mu_1^{f}=\mu_2^{g}$ in $[0,\infty)$.
\end{proof}

Define weighted measures on $[a,b]$ by
\begin{equation}
\mu_i(E) = \int _{E} \gamma^{-1/(p_i(x) - 1)} \der x \quad \text{for all } E \subset [a,b].
\end{equation}
\begin{theorem}
\label{thm:main}
Under the standing assumptions,
if $\Lambda_\gamma^{p_1}= \Lambda_\gamma^{p_2}$,
then
$\mu_1^{p_1} = \mu_2^{p_2}$.
\end{theorem}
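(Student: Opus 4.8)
The plan is to reduce the statement to Lemma~\ref{lem:lp_uniqueness} by way of Proposition~\ref{prop:Lpnorms}, and then to transfer the conclusion from the distribution function of $x \mapsto 1/(p(x)-1)$ to that of $p(x)$ itself using Lemma~\ref{lemma:equi_bijection}.

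First, since $\Lambda_\gamma^{p_1} = \Lambda_\gamma^{p_2}$, Proposition~\ref{prop:Lpnorms} gives, for every $n \in \N$,
\begin{equation}
\int_a^b \gamma^{-1/(p_1(x)-1)} \parens{\frac{1}{p_1(x)-1}}^n \der x = \int_a^b \gamma^{-1/(p_2(x)-1)} \parens{\frac{1}{p_2(x)-1}}^n \der x.
\end{equation}
Put $f_i(x) = 1/(p_i(x)-1)$. By the standing assumptions $\eps < \gamma < 1/\eps$ and $1 + \eps < p_i < 1/\eps$ almost everywhere, so each $f_i$ takes values in a compact subinterval of $(0,\infty)$ and the weight $\gamma^{-1/(p_i(x)-1)}$ lies between two positive constants; hence each $\mu_i$ is a finite (and nonzero) measure on $[a,b]$. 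The displayed identity is precisely $\int_{[a,b]} f_1^{\,n}\der\mu_1 = \int_{[a,b]} f_2^{\,n}\der\mu_2$ for all $n \in \N$.

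Next I would apply Lemma~\ref{lem:lp_uniqueness} with the sequence $n_j = j+1$, $j \geq 1$: these are strictly increasing, exceed $1$, and $\sum_j 1/(n_j-1) = \sum_j 1/j = \infty$; the remaining hypotheses (boundedness of $f_i$, finiteness of $\mu_i$, equality of the moments) were checked above. The conclusion is $\mu_1^{f_1} = \mu_2^{f_2}$ on $[0,\infty]$; evaluating at $t=0$ and using that $f_i > 0$ almost everywhere gives in particular $\mu_1([a,b]) = \mu_2([a,b])$.

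Finally, observe that $p_i = h \circ f_i$ with $h(t) = 1 + 1/t$, which is strictly decreasing and continuous on the interval $(0,\infty)$, an interval containing the (essential) ranges of $f_1$ and $f_2$. Since $f_1$ and $f_2$ are equimeasurable with respect to $\mu_1$ and $\mu_2$ and moreover $\mu_1([a,b]) = \mu_2([a,b])$, Lemma~\ref{lemma:equi_bijection} yields that $h \circ f_1 = p_1$ and $h \circ f_2 = p_2$ are equimeasurable with respect to $\mu_1$ and $\mu_2$, that is, $\mu_1^{p_1} = \mu_2^{p_2}$. I do not expect a genuine obstacle: the three earlier results carry the argument, and the only steps requiring attention are verifying the quantitative hypotheses of Lemmas~\ref{lem:lp_uniqueness} and~\ref{lemma:equi_bijection} — most notably that the two weighted measures have equal total mass, which is why the value at $t = 0$ of the common distribution function is invoked.
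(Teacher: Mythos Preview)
Your proof is correct and follows essentially the same route as the paper: Proposition~\ref{prop:Lpnorms} feeds the weighted moments into Lemma~\ref{lem:lp_uniqueness}, the value at $t=0$ supplies the equal-total-mass hypothesis, and Lemma~\ref{lemma:equi_bijection} with $h(t)=1+1/t$ transfers equimeasurability from $1/(p_i-1)$ to $p_i$. The only cosmetic difference is that you spell out the choice $n_j=j+1$ explicitly, which the paper leaves implicit.
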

\begin{proof}
Since $\gamma$ and $p$ are suitably bounded, $\gamma^{-1/(p_i(x) - 1)}$ are bounded from below and above by positive numbers.
Hence, $\parens{[a, b], \mu_i}$ are finite measure spaces. By proposition~\ref{prop:Lpnorms} we get
$$
\int_a^b \parens{\frac{1}{p_i(x)-1}}^n \der \mu_i(x)
$$
for all $n \in \N$ from the DN~map.
The functions $1/(p_i(x)-1)$ are bounded.
Now lemma~\ref{lem:lp_uniqueness} gives 
\begin{equation}
\mu_{1}^{1/(p_{1}-1)}(t)=\mu_{2}^{1/(p_{2}-1)}(t)\quad\text{for all }t\in[0,\infty).\label{eq:main_1}
\end{equation}
Using this with $t=0$ and noticing that $[a,b]=\left\{ x\in[a,b]:1/(p_{i}-1)>0\right\}$, we obtain
\begin{equation}
\mu_{1}([a,b])=\mu_{1}^{1/(p_{1}-1)}(0)=\mu_{2}^{1/(p_{2}-1)}(0)=\mu_{2}([a,b]).\label{eq:main_2}
\end{equation}
Let $h(t)=1+1/t$. Then $h:(0,\infty)\rightarrow(1,\infty)$ is strictly decreasing and we have $h\circ(1/(p_{i}-1))=p_{i}$. It now follows from \eqref{eq:main_1}, \eqref{eq:main_2} and Lemma \ref{lemma:equi_bijection} that 
\[
\mu_{1}^{p_{1}}(t)=\mu_1^{h\circ(1/(p_{1}-1))}(t)=\mu_2^{h\circ(1/(p_{2}-1))}(t)=\mu_{2}^{p_{2}}(t)\quad\text{for all }t\in[0,\infty).\qedhere
\]
\end{proof}

If $\gamma\equiv1$, then the previous theorem immediately yields the equimeasurability
of $p_{1}$ and $p_{2}$ in the Lebesgue measure. If $\gamma$ is some other constant, then the statement is seemingly different, but below we show that this is only apparent. This is natural as a constant $\gamma \not = 0$ plays no role in equation~\eqref{eq:forward}, though it affects the DN~map.

\begin{thm}
\label{thm:constant}
Suppose that $\gamma>0$ is a constant and that the assumptions of theorem \ref{thm:main} hold. Then $p_1$ and $p_2$ are equimeasurable with respect to the Lebesgue measure.
\end{thm}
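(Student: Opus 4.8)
The plan is to first reduce to the auxiliary function $f_i := 1/(p_i-1)$. The map $h(t)=1+1/t$ is a strictly decreasing continuous bijection of $(0,\infty)$ onto $(1,\infty)$ and satisfies $p_i = h\circ f_i$; since the ambient Lebesgue measure on $[a,b]$ has the same total mass $b-a$ on both sides, Lemma~\ref{lemma:equi_bijection} shows that it is enough to prove that $f_1$ and $f_2$ are equimeasurable with respect to Lebesgue measure. The input I would use is the identity $\mu_1^{f_1}=\mu_2^{f_2}$ obtained in the proof of Theorem~\ref{thm:main} (equation~\eqref{eq:main_1}); the point of assuming $\gamma$ constant is that then $\der\mu_i = \gamma^{-1/(p_i(x)-1)}\der x = \gamma^{-f_i(x)}\der x$, i.e.\ the weight is an \emph{explicit, $i$-independent} positive function of the value $f_i(x)$ alone.

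Next I would pass to pushforward measures. For a Borel set $S\subseteq(0,\infty)$ put $\nu_i(S):=\abs{\joukko{x\in[a,b]; f_i(x)\in S}}$; this is a finite Borel measure supported in the bounded interval into which $f_i$ maps (by the standing assumptions), and $f_1,f_2$ are equimeasurable with respect to Lebesgue measure precisely when $\nu_1=\nu_2$. A change of variables rewrites the distribution-function identity as
\begin{equation}
\int_{(t,\infty)}\gamma^{-s}\der\nu_1(s) = \mu_1^{f_1}(t)=\mu_2^{f_2}(t) = \int_{(t,\infty)}\gamma^{-s}\der\nu_2(s)\qquad\text{for all }t\ge 0 .
\end{equation}

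The remaining step is a uniqueness-of-measures argument. The half-lines $\joukko{(t,\infty); t\ge 0}$ form a $\pi$-system that contains $(0,\infty)$ and generates the Borel $\sigma$-algebra of $(0,\infty)$; the two finite measures $\gamma^{-s}\der\nu_i(s)$ agree on this $\pi$-system and have equal total mass (take $t=0$ above and use that $f_i>0$ a.e., so that value is $\mu_i([a,b])$, and these coincide by~\eqref{eq:main_2}). Hence $\gamma^{-s}\der\nu_1(s)=\gamma^{-s}\der\nu_2(s)$ as measures, and since the density $\gamma^{-s}$ is strictly positive — and on the relevant bounded range of $s$ bounded away from $0$ and $\infty$ — we may divide it out to get $\nu_1=\nu_2$, which finishes the proof. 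Note that this uses only positivity of the weight and not the sign of $\gamma-1$, so the case $\gamma=1$ is in no way special; this is exactly the heuristic, mentioned before the statement, that a nonzero constant $\gamma$ is invisible in equation~\eqref{eq:forward}. The only thing to be careful about is the bookkeeping in this last step — that the $\pi$-system truly generates the Borel sets and that finiteness together with equal total mass is in place so that agreement on the $\pi$-system propagates — but this is routine once the reduction to pushforward measures has been carried out.
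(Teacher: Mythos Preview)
Your argument is correct, and it is genuinely different from the paper's proof. Both proofs start from the same place --- the identity $\mu_1^{f_1}=\mu_2^{f_2}$ of equation~\eqref{eq:main_1} with $f_i=1/(p_i-1)$, together with Lemma~\ref{lemma:equi_bijection} to reduce to equimeasurability of $f_1,f_2$ --- but then diverge completely. The paper splits into cases $\gamma<1$ and $\gamma>1$, rewrites the identity in terms of $\tilde f=f\log\gamma^{-1}$, expresses the resulting integrals via a Stieltjes measure associated to the cumulative distribution function, integrates by parts, and finally applies Gr\"onwall's lemma to the difference. Your route is more conceptual: you push Lebesgue measure forward by $f_i$ to get $\nu_i$, observe that~\eqref{eq:main_1} becomes equality of the weighted measures $\gamma^{-s}\der\nu_i(s)$ on the $\pi$-system of right half-lines, invoke the $\pi$--$\lambda$ (Dynkin) uniqueness theorem to identify the weighted measures, and then divide out the strictly positive density. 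This is shorter, avoids the case split on the sign of $\log\gamma$, and makes transparent why constancy of $\gamma$ is exactly what is needed (the weight becomes a function of the value $f_i(x)$ alone, hence factors through the pushforward). The paper's approach, by contrast, is more hands-on and self-contained, relying only on elementary real-analysis tools rather than on the $\pi$--$\lambda$ theorem.
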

\begin{proof}
We only consider the case $\gamma<1$ as the case $\gamma>1$ is similar, the main differences being the use of an auxiliary variable $\tilde t = t \log (\gamma^{-1})$ and defining the function~$F$, below, to include an additional term $-(b-a)$.

By lemma \ref{lemma:equi_bijection} it suffices to show the equimeasurability of the functions
$f:=1/(p_{1}-1)$ and $g:=1/(p_{2}-1)$. By \eqref{eq:main_1} we have
\[
\int_{\left\{ x\in[a,b];f(x)>t\right\} }\gamma^{-f(x)}\der x=\int_{\left\{ x\in[a,b];g(x)>t\right\} }\gamma^{-g(x)}\der x\quad\text{for all }t\in\mathbb{R}.
\]
By denoting $\tilde{f}:=f\log\gamma^{-1}$ and $\tilde{g}:=g\log\gamma^{-1}$,
this implies that
\begin{equation}
\label{eq:const_gamma 1}
\int_{\left\{ x\in[a,b];\tilde{f}(x)\leq t\right\} }e^{\tilde{f}(x)}\der x=\int_{\left\{ x\in[a,b];\tilde{g}(x)\leq t\right\} }e^{\tilde{g}(x)}\der x\quad\text{for all }t\in\mathbb{R}.
\end{equation}
We compute for $t\geq0$
\begin{align}
\label{eq:const_gamma 2}
\int_{\left\{ x\in[a,b];\tilde{f}(x)\leq t\right\} }e^{\tilde{f}(x)}\der x= & \int_{0}^{\infty}\left|\left\{ x\in[a,b];\tilde{f}(x)\leq t\text{ and }e^{\tilde{f}(x)}>s\right\} \right|\der s\nonumber \\
= & \int_{0}^{\infty}\left|\left\{ x\in[a,b];\log s<\tilde{f}(x)\leq t\right\} \right|\der s\nonumber \\
= & \int_{-\infty}^{\infty}\left|\left\{ x\in[a,b];u<\tilde{f}(x)\leq t\right\} \right|e^{u}\der u,
\end{align}
where in the last identity we did a change of variables $s=e^{u}$.
We define for all $u\in\mathbb{R}$ the function $F(u):=\left|\left\{ x\in[a,b];\tilde{f}(x)\le u\right\} \right|$.
Since $F$ is non-decreasing, continuous from the right and $F(0)=0$,
there exists an associated Stieltjes measure~\cite[chapter~6, section~8]{Knapp:2005} (still denoted by $F$)
such that
\[
F(c,d]=F(d)-F(c)\quad\text{whenever }c<d.
\]
Then for any $u\in\mathbb{R}$ we have
\begin{align*}
\left|\left\{ x\in[a,b];u<\tilde{f}(x)\leq t\right\} \right|= & \chi_{\left\{ u<t\right\} }(F(t)-F(u))=\chi_{\left\{ u<t\right\} }\int_{(u,t]}\der F(y).
\end{align*}
We combine this with~\eqref{eq:const_gamma 2} and continue the computation by using Tonelli's theorem
to obtain 
\begin{align}
\label{eq:const gamma 3}
\int_{\left\{ x\in[a,b];\tilde{f}(x)\leq t\right\} }e^{\tilde{f}(x)}\der x= & \int_{-\infty}^{\infty}\chi_{\left\{ u<t\right\} }e^{u}\int_{(u,t]}\der F(y)\der u\nonumber \\
= & \int_{-\infty}^{\infty}\int_{(u,t]}e^{u}\der F(y)\der u\nonumber \\
= & \int_{-\infty}^{\infty}\int_{\mathbb{R}}\chi_{\left\{ u<y\leq t\right\} }e^{u}\der F(y)\der u\nonumber \\
= & \int_{\mathbb{R}}\int_{-\infty}^{\infty}\chi_{\left\{ u<y\leq t\right\} }e^{u}\der u\der F(y)\nonumber \\
= & \int_{(0,t]}\int_{-\infty}^{y}e^{u}\der u\der F(y)\nonumber \\
= & \int_{(0,t]}e^{y}\der F(y).
\end{align}
Integrating by parts (see e.g.\ \cite[page~344]{Knapp:2005}), we obtain 
\begin{align*}
\int_{(0,t]}e^{y}\der F(y)=e^{t}F(t)-e^{0}F(0)- & \int_{0}^{t}e^{y}F(y)\der y.
\end{align*}
Combining this with~\eqref{eq:const gamma 3}, we have
\[
\int_{\left\{ x\in[a,b];\tilde{f}(x)\leq t\right\} }e^{\tilde{f}(x)}\der x=e^{t}F(t)-F(0)-\int_{0}^{t}e^{y}F(y)\der y\quad\text{for all }t\geq0.
\]
Of course, an analogical identity holds for $\tilde{g}$. Thus by
applying \eqref{eq:const_gamma 1} and using that $F(0)=0=G(0)$,
we obtain
\[
e^{t}(F(t)-G(t))=\int_{0}^{t}e^{y}(F(y)-G(y))\der y\quad\text{for all }t\geq0.
\]
Denoting $h(t)=e^t\left|F(t)-G(t)\right|$, this implies
\[
h(t)\leq\int_{0}^{t}h(y)\der y\quad\text{for all }t\geq0.
\]
By iterating the above inequality or applying Grönwall's lemma, it follows that $h\equiv0$, which implies the equimeasurability of $\tilde f$ and $\tilde g$, and thereby of $f$ and $g$.
\end{proof}
If $\gamma$ were not constant, we could try following the same proof, but the sets in equations~\eqref{eq:const_gamma 1} and \eqref{eq:const_gamma 2} would have additional $x$-dependencies, making the next step infeasible.

\subsection*{Acknowledgements}
T.B.\ was funded by the FRIPRO Toppforsk project `Waves and Nonlinear Phenomena'.

\bibliographystyle{plain}
\bibliography{math}


\end{document}